\newtheorem{defin}{Definition}
\newtheorem{theorem}{Theorem}
\newtheorem{lemma}{Lemma}
\newtheorem{prop}{Proposition}
\newtheorem{question}{Question}
\newtheorem{conjecture}{Conjecture}
\title{On the isometrisability of group actions on $p$-spaces}
\author[M. Gerasimova]{Maria Gerasimova}
\address{Maria Gerasimova, TU Dresden, Germany}
\email{maria.gerasimova@tu-dresden.de}
\author[A. Thom]{Andreas Thom}
\address{Andreas Thom, TU Dresden, Germany}
\email{andreas.thom@tu-dresden.de}
\begin{document}
\maketitle

\begin{abstract}
In this note we consider a $p$-isometrisability property of discrete groups. If $p=2$ this property is equivalent to unitarisability. We prove that any group containing a non-abelian free subgroup is not $p$-isometrisable for any $p\in (1, \infty)$. We also discuss some open questions and possible relations of $p$-isometrisability with the recently introduced Littlewood exponent ${\rm Lit}(\Gamma)$.
\end{abstract}

\tableofcontents

\section{Introduction}
Let $\Gamma$ be a discrete group. For a Banach space $X$, we write ${\rm GL}(X)$ for the group of bounded invertible operators on $X$. A representation $\pi \colon \Gamma \to {\rm GL}(X)$ of a group $\Gamma$ is called \textit{uniformly bounded} if 
$$\sup \limits_{g\in \Gamma} \|\pi(g)\| <\infty.$$

In case when $X$ is a Hilbert space $\mathcal{H}$, the study of uniformly bounded representations has a long history dating back to \cite{Dixmier}. We say that a representation $\pi \colon \Gamma \to {\rm GL}(\mathcal{H})$ is called \textit{unitarisable}, if there exists $S \in {\rm GL}(\mathcal{H})$ such that $S\pi(g)S^{-1}$ is a unitary operator for every $g\in \Gamma$, i.e. $\pi$ is conjugated to a unitary representation. Equivalently, we have that $\pi$ is unitarisable if there exists a equivalent scalar product, which makes $\pi$ unitary. It is clear that any representation that is conjugated to a unitary representation is uniformly bounded. A group $\Gamma$ is said to be \textit{unitarisable} if any uniformly bounded representation is unitarisable. It is well known that amenable groups are unitarisable \cite{Day, Dixmier, Nakamura}. The following question is still open:
\begin{question}[Dixmier]
Are unitarisable groups amenable?
\end{question}

In this short note, we study a generalization of the notion of unitarisability in the following way. First of all, for $p \in (1,\infty)$, a natural class of Banach spaces called $p$-spaces can be introduced and a Banach space is a $2$-space if and only it is isometrically isomorphic to a Hilbert space. We will say that a group $\Gamma$ is \textit{$p$-isometrisable} if for any uniformly bounded representation $\pi \colon \Gamma \to {\rm GL}(X)$ on a $p$-space $X$ there exists a new $p$-norm which makes $\pi$ isometric. 

Our first result is the following theorem.

\begin{theorem}
Amenable groups are $p$-isometrisable for any $p>1$.
\end{theorem}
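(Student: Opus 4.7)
The plan is to mimic the classical Dixmier--Day--Nakamura averaging argument for unitarisability, averaging $p$-th powers of norms in place of squared norms. The two key ingredients are a F\o lner sequence $(F_n)_n$ in $\Gamma$, which encodes amenability, and the closure of the class of $p$-spaces under ultraproducts, which is what guarantees that the averaged norm still lands in the correct class.

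Concretely, fix a uniformly bounded representation $\pi \colon \Gamma \to {\rm GL}(X)$ with $M := \sup_{g \in \Gamma} \|\pi(g)\|$, and realise $X$ as a subspace of some $L^p(\Omega, \mu)$. For each $n$ set
$$\|x\|_n := \left(\frac{1}{|F_n|}\sum_{g \in F_n} \|\pi(g) x\|^p\right)^{1/p},$$
which is a norm on $X$ satisfying $M^{-1}\|x\| \leq \|x\|_n \leq M\|x\|$. Fix a non-principal ultrafilter $\omega$ on $\mathbb{N}$ and set $\|x\|_\pi := \lim_\omega \|x\|_n$. Invariance $\|\pi(h) x\|_\pi = \|x\|_\pi$ for all $h \in \Gamma$ follows from the F\o lner condition: a telescoping estimate gives $|\|\pi(h)x\|_n^p - \|x\|_n^p| \leq M^p \|x\|^p \cdot |F_n \triangle F_n h|/|F_n| \to 0$.

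The nontrivial step is to verify that $(X, \|\cdot\|_\pi)$ is still a $p$-space; this is where the argument departs from the Hilbert case, in which averaging an inner product trivially gives back an inner product. For each $n$, the map $\iota_n \colon X \to \ell^p(F_n, L^p(\mu))$ defined by $x \mapsto (|F_n|^{-1/p} \pi(g) x)_{g \in F_n}$ is, by construction, an isometric embedding of $(X, \|\cdot\|_n)$ into a space which is itself an $L^p$ space (namely $L^p$ of counting measure on $F_n$ times $\mu$ on $\Omega$). Passing to the ultraproduct and invoking the Dacunha-Castelle--Krivine theorem, an ultraproduct of $L^p$ spaces is again isometric to an $L^p$ space, so the induced map $x \mapsto [\iota_n(x)]_\omega$ is an isometric embedding of $(X, \|\cdot\|_\pi)$ into an $L^p$ space, showing that $(X, \|\cdot\|_\pi)$ is a $p$-space.

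The main obstacle is exactly this last point: staying inside the class of $p$-spaces while averaging norms. For $p=2$ there is nothing to check, because any convex combination of Hilbert inner products is automatically a Hilbert inner product; for general $p$ one genuinely needs the ultraproduct structure of $L^p$-spaces (or, depending on the precise definition of $p$-space adopted, the analogous closure property for $\mathrm{SL}^p$ or $\mathrm{SQ}^p$ spaces). Granted this, the invariance step above finishes the proof that $\|\cdot\|_\pi$ is an equivalent $p$-norm realising $\pi$ as a group of isometries.
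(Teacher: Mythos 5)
Your route (F\o lner averaging of $p$-th powers plus an ultralimit) is genuinely different from the paper's, and its two main ideas are sound, but one step is wrong as written. You begin by ``realising $X$ as a subspace of some $L^p(\Omega,\mu)$''; with the paper's definition a $p$-space is a ${\rm QSL}_p$-space, i.e.\ a quotient of a subspace of an $L^p$-space, and such spaces need not embed isometrically into any $L^p$-space, so the embedding $\iota_n\colon X\to\ell^p(F_n,L^p(\mu))$ and the appeal to Dacunha-Castelle--Krivine are not available in general (your parenthetical about ${\rm SQ}^p$ shows you sensed this, but your argument as given only covers subspaces of $L^p$). The repair stays inside your scheme and is in fact simpler than what you propose: map $(X,\|\cdot\|_n)$ isometrically into $\ell^p(F_n;X)$ with the \emph{original} norm on each coordinate and use closure of $p$-spaces under finite $\ell^p$-sums and subspaces (Lemma \ref{spaces}); or, most directly, check Kwapie\'n's intrinsic matrix inequality for $\|\cdot\|_n$ by applying the inequality for the original norm to the tuple $(\pi(g)x_1,\dots,\pi(g)x_n)$ for each $g\in F_n$ and summing, and then note that the inequality survives the pointwise ultralimit defining $\|\cdot\|_\pi$ --- no ultraproduct of spaces is needed at all. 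Two smaller points: a F\o lner \emph{sequence} exists only for countable $\Gamma$, so for a general discrete amenable group you should use a F\o lner net (with an ultrafilter on its index set) or an invariant mean; and the invariance estimate needs the right-F\o lner condition $|F_nh\,\triangle\, F_n|/|F_n|\to 0$, which is fine. The paper's proof is exactly this streamlined version: it averages $g\mapsto\|\pi(g)x\|^p$ against an invariant mean and verifies Kwapie\'n's characterization for the averaged norm using only positivity and finite additivity of the mean, with invariance of the mean giving isometry --- no realization of $X$ inside $L^p$, no ultraproducts, and no countability assumption. What your version buys, once repaired, is a concrete isometric embedding of the renormed space into an ultraproduct, but at the cost of heavier machinery than the statement requires.
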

Note that the similar result was proven in \cite{arch} using different methods.\\
The second natural question, arises in this context, if there exist non-isometrisable groups for some $p$ (or for any $p>1$). We prove the following.
\begin{theorem}
For any $p\in (1,\infty)$, any group containing a non-abelian free subgroup is not $p$-isometrisable.
\end{theorem}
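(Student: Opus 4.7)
The plan is to combine two independent steps: a heredity statement (that $p$-isometrisability passes to subgroups) and a non-isometrisability statement for $F_2$ itself. Together, if $G \supseteq F_2$ were $p$-isometrisable, heredity would force $F_2$ to be $p$-isometrisable, contradicting the second step.

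For heredity, given $H \le G$ and a uniformly bounded representation $\pi\colon H \to \mathrm{GL}(X)$ on a $p$-space $X$, I would pass to the induced representation $\widetilde\pi = \mathrm{Ind}_H^G \pi$ on the space $Y$ of $\pi$-equivariant sections $G \to X$ in the natural $\ell^p$-sense, so that $Y$ decomposes as an $\ell^p$-sum indexed by $G/H$ of copies of $X$. Crucially, $Y$ is again a $p$-space (since $\ell^p$-sums and closed subspaces of $p$-spaces are $p$-spaces) and $\widetilde\pi$ is uniformly bounded, with the same uniform bound as $\pi$. Applying the assumed $p$-isometrisability of $G$ yields an equivalent $p$-norm on $Y$ making $\widetilde\pi$ isometric; restricting this norm to the $H$-invariant summand indexed by the trivial coset --- which is canonically $\pi$-equivariantly isomorphic to $X$ --- gives an equivalent $p$-norm on $X$ making $\pi$ isometric, as required.

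For the non-isometrisability of $F_2$, I would exhibit a uniformly bounded representation on an $\ell^p$-based $p$-space that cannot be isometrised. The model is the classical Pytlik--Szwarc family in the Hilbert case: a one-parameter deformation of the left regular representation $\lambda_p$ on $\ell^p(F_2)$ by multipliers built from the word-length cocycle, of schematic form $\pi_z(g)\delta_h = c_z(g,h)\,\delta_{gh}$, together with a finite-dimensional correction to absorb the constant direction. Uniform boundedness in a range of the parameter $z$ would follow from tree combinatorics in $F_2$ and standard $\ell^p$ estimates on convolution-type operators supported on spheres of the Cayley graph. To rule out an isometric renorming, I would argue by contradiction: any equivalent $p$-norm making $\pi_z$ isometric would force the matrix coefficients of $\pi_z$ to behave like coefficients of an isometric $F_2$-action on a $p$-space, which in turn produces averaging/summability constraints along the deformation parameter incompatible with the explicit values of $c_z(g,h)$ --- a Dixmier-type obstruction specific to the non-amenability of $F_2$.

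The hard part is the second step. The Hilbert-space non-unitarisability of $F_2$ rests on positive-definiteness and $C^*$-algebraic tools that are not available for general $p$-spaces; replacing them requires $L^p$-duality, careful renorming arguments, and delicate combinatorial estimates on $\ell^p(F_2)$ for operators twisted by word-length multipliers. The restriction step in Step~1 is also not entirely formal: one must verify that the specific notion of ``$p$-space'' adopted in the paper is closed under $\ell^p$-sums and closed subspaces, so that both passing to $Y$ and restricting to the coset summand remain within the class.
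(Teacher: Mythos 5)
Your first step (heredity of $(p,q)$-isometrisability under passing to subgroups, via induction with $\ell^p$-sums of $p$-spaces and restriction of the new norm to the summand over the trivial coset) is exactly the paper's Lemma~\ref{induction} and is sound. The genuine gap is in your second step, which is where all the content of the theorem lies and where you only gesture at an argument. You propose to build a Pytlik--Szwarc-type deformation of the regular representation on $\ell^p(\mathbb{F}_2)$ and then to exclude an isometric renorming by an unspecified ``Dixmier-type obstruction'' producing ``averaging/summability constraints'' on the multipliers $c_z(g,h)$. No such obstruction is identified, and none is standard: the Hilbert-space tools you allude to (positive definiteness, $C^*$-arguments) have no ready analogue here, and nothing in your sketch exhibits a quantity that distinguishes a uniformly bounded action on a $p$-space from one admitting an equivalent invariant $p$-norm. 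As written, the crucial step is a restatement of the problem rather than a proof.

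The paper's missing idea is a concrete \emph{spectral} obstruction to being isometric on a $p$-space. If $g_1,\dots,g_r$ act as isometries of a $p$-space $X$ with $1<p\le 2$, then the spectrum of $\mu_1=\frac{1}{2r}\sum_{i}(g_i+g_i^{-1})$ lies in the horizontal strip $\{z:\ |\mathrm{Im}(z)|\le\theta\}$ with $\theta=\frac{2}{p}-1$ (Lemma~\ref{spec}); this is proved by factoring $2r+i\alpha\sum_i(g_i+g_i^{-1})$ through $\ell^p_{2r}(X)$ as $x^tA_r(\alpha)y$ and estimating $\|A(\alpha)\|_{p\to p}\le 1+\theta|\alpha|+o(|\alpha|)$ by Riesz--Thorin interpolation (Lemma~\ref{estimate}). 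Crucially, no new representation on an $\ell^p$-space is needed: since a Hilbert space is a $p$-space for every $p$, the paper simply reuses Pytlik's analytic family $\pi_z$ on $L^2(\Omega,\mu)$, for which $z\in\mathrm{sp}(\pi_z(\mu_1))$. Choosing $r$ so that the parameter ellipse contains some $z_0$ with $|\mathrm{Im}(z_0)|>\theta$, an equivalent invariant $p$-norm would force $\mathrm{sp}(\pi_{z_0}(\mu_1))$ into the strip, a contradiction; hence $\mathbb{F}_r$ is not even $(2,p)$-isometrisable, heredity transfers this to any group containing a non-abelian free subgroup, and the range $p>2$ is handled by the duality lemma (a point your sketch also leaves untouched). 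Unless you can supply a renorming-invariant quantity playing the role of this spectral constraint, your second step does not go through.
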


Our initial hope that was that there is a relationship between the $p$-isometrisability of a group $\Gamma$ and the recently introduced Littlewood exponent ${\rm Lit}(\Gamma)$, see \cite{GGMT}, however we are not there yet. We discuss some open problems in Section \ref{final}.

\section{Basic properties of $p$-spaces}
Let $p\in [1,\infty)$. Let's start by recalling some basic definitions in order to keep the paper self-contained.
\begin{defin}
A Banach space is called an ${\rm L}^p$-space if it is of the form $L^p(X,\mu)$ for some measure space $X$. A Banach space is called ${\rm QSL}_p$-space (or just a $p$-space for short) if it is isometrically isomorphic to a quotient of a closed subspace of an $L^p$-space.
\end{defin}
Let $M\in M_{n\times m}(\mathbb{C})$. By $\|M\|_{p\to q}$ we will denote the norm of $M$ as a linear operator from $m$-dimensional $\ell^p$-space to $n$-dimensional $\ell^q$-space. It was proven in \cite{Kwapien} that the class of $p$-space can be characterized in the following intrinsic way.
\begin{defin}
A Banach space $X$ is a $p$-space if for any $n\in \mathbb{N}$, the natural inclusion
$$M_n(\mathbb C) \to L(\ell^p(n,X),\ell^p(n,X))$$ is a contraction. In more conrete terms, we have
$$\sum \limits_{i=1}^{n}\left\Vert\sum \limits_{j=1}^n M_{ij}x_j\right\Vert^p_X \leq \sum \limits_{k=1}^n \left \Vert x_k\right\Vert^p_X$$
for any $n$-tuple $(x_1, \ldots, x_n) \in X^n$ and any matrix $M \in M_n(\mathbb{C})$ such that
$$\sum \limits_{i=1}^n \left|\sum \limits_{j=1}^n M_{ij}r_j\right|^p \leq \sum \limits_{k=1}^n |r_k|^p$$ for any $(r_1, \ldots, r_k) \in \mathbb{C}^n$.
\end{defin}

Let us recall some  known properties of $p$-spaces, see for example \cite{Runde} for proofs and or references. It is quite clear that every Banach space is a $1$-space and we already mentioned that every $2$-space is isometrically isomorphic to a Hilbert space.

\begin{lemma} \label{spaces} Let $p,q \in (1,\infty)$.
\begin{enumerate}
\item A closed subspace of a $p$-space is a $p$-space.
\item A quotient of a $p$-space is a $p$-space.
\item The dual of a $p$-space is a $q$-space, where $\frac{1}{p}+\frac{1}{q}=1$.
\item Every $p$-space is reflexive. \label{spaces2}
\item If $A$ and $B$ are $p$-spaces, so is $A\oplus_p B$, the completion of $A\oplus B$ for the norm $$\|(a,b)\|=(\|a\|_A^p+\|b\|_B^p)^{\frac{1}{p}}.$$
\item An $q$-space is a $p$-space if  $2\leq q\leq p$ or $p\leq q \leq 2$.
\end{enumerate}
\end{lemma}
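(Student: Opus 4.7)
The plan is to handle (1), (2), (4), (5) directly from the ${\rm QSL}_p$-definition, derive (3) from duality together with the coincidence of ${\rm SQL}_q$ and ${\rm QSL}_q$ in the reflexive range, and single out (6) as the one statement that requires genuinely new input.

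For (1), writing $X = Y/Z$ with $Y \subseteq L^p(\mu)$ a closed subspace and $W \subseteq X$ closed, I would take $Y_W := \pi^{-1}(W)$ under the quotient map $\pi \colon Y \to X$, observe that $Y_W$ is again a closed subspace of $L^p(\mu)$, and identify $W \cong Y_W/Z$. Part (2) is the symmetric statement: $X/V \cong Y/\pi^{-1}(V)$ presents the further quotient as a quotient of the same ambient subspace $Y$. Part (4) is immediate, since $L^p(\mu)$ is reflexive for $p \in (1,\infty)$ and reflexivity is inherited by closed subspaces and quotients. For (5) the canonical isometric identification $L^p(\mu_A) \oplus_p L^p(\mu_B) \cong L^p(\mu_A \sqcup \mu_B)$ reduces everything to (1) and (2) applied to the $\ell^p$-direct sum of the defining QSL-presentations of $A$ and $B$.

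For (3), the natural computation $(Y/Z)^* \cong Z^\perp \hookrightarrow Y^* \cong L^q(\mu)/Y^\perp$ only exhibits the dual of a $p$-space as a subspace of a quotient of $L^q$, i.e.\ an ${\rm SQL}_q$-space. To conclude that it is also a ${\rm QSL}_q$-space one needs the fact, valid in the reflexive range, that the classes ${\rm SQL}_q$ and ${\rm QSL}_q$ coincide; item (4) is precisely what makes this feedback possible, and the standard proof is by a double-dualisation argument.

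The main obstacle is (6). The ${\rm QSL}$-description is not well-adapted to comparing the classes for different exponents, so I would switch to the intrinsic matrix characterisation recalled just above the lemma and verify that every matrix $M$ acting as a contraction on scalar $\ell^p$ continues to act as a contraction on $\ell^p(n,X)$ whenever $X$ is a $q$-space with $q$ between $p$ and $2$. The essential non-formal ingredient is the classical isometric embedding $L^q \hookrightarrow L^p$ for $1 \leq p \leq q \leq 2$, obtained via $q$-stable random variables (together with its easier $2 \leq q \leq p$ counterpart); combined with (1) and (2) this already shows that every classical $L^q$-space is a $p$-space, and Kwapie\'n's matrix criterion then propagates the conclusion to all ${\rm QSL}_q$-spaces. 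I expect this stable-law embedding, and its transfer to the matrix condition, to be the genuinely technical step, which is why a self-contained proof of this part is usually avoided and deferred to references such as \cite{Runde}.
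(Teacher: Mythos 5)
The paper itself offers no argument for this lemma at all --- it simply points to \cite{Runde} for proofs and references --- so the only question is whether your sketch is sound. Items (1), (2), (4) and (5) are correct, and (3) is correct in substance, though your appeal to reflexivity and a ``double-dualisation argument'' is a detour: the identification $(Y/Z)^*\cong Z^{\perp}\subseteq Y^*\cong L^q(\mu)/Y^{\perp}$ exhibits the dual as a closed subspace of a quotient of $L^q$, and the coincidence of ${\rm SQL}_q$ with ${\rm QSL}_q$ is elementary and needs no reflexivity --- pulling a subspace back along the quotient map (exactly your argument for (1)) turns a subspace of a quotient into a quotient of a subspace, isometrically. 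Likewise, in the case $p\le q\le 2$ of (6), once you have the stable-law embedding $L^q\hookrightarrow L^p$ you do not need Kwapie\'n's criterion to ``propagate'': a quotient of a subspace of $L^q$ is then a quotient of a subspace of $L^p$ directly by (1) and (2).

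The genuine gap is in the other half of (6), the range $2\le q\le p$. The ``easier $2\le q\le p$ counterpart'' of the stable-law embedding does not exist: for $2<q<p$ the space $L^q$ does not embed isometrically, or even isomorphically, into $L^p$. Indeed, by the Kadec--Pe\l czy\'nski dichotomy an infinite-dimensional subspace of $L^p$ with $p>2$ is either isomorphic to a Hilbert space or contains a copy of $\ell^p$, and $L^q$ with $2<q<p$ is neither. So the step on which this half of your argument rests fails. The standard repair uses precisely the items you have already established: if $2\le q\le p$ and $X$ is a $q$-space, then the conjugate exponents satisfy $p'\le q'\le 2$, so $X^*$ is a $q'$-space by (3), hence a $p'$-space by the stable-law case, and therefore $X\cong X^{**}$ is a $p$-space by (3) and (4). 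Equivalently, dualising the embedding $L^{q'}\hookrightarrow L^{p'}$ shows that for $2\le q\le p$ the space $L^q$ is an isometric quotient of an $L^p$-space rather than a subspace, which together with (1) and (2) also suffices.
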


\section{Isometrisability of group actions on $p$-spaces}
Throughout the rest of the paper, let $\Gamma$ be a discrete group.
\begin{defin} Let $p,q \in [1,\infty)$. A uniformly bounded representation $\pi \colon \Gamma \to {\rm GL}(X)$ on a $p$-space $X$ is called $q$-isometrisable, if there exists an equivalent $q$-norm on $X$ such that this representation is isometric with respect to this norm. A group $\Gamma$ is said to be $(p,q)$-isometrisable if for any $p$-space $X$, every uniformly bounded representation $\pi\colon \Gamma \to {\rm GL}(X)$ is $q$-isometrisable.
\end{defin}

Note that the definition allows for examples only if the $p$-space in question is a $q$-space at all -- however, this happens for example when $1 \leq q \leq p \leq 2$ and $2 \leq p \leq q \leq \infty$.
 
The first observation is that every group $\Gamma$ is $(p,1)$-isometrisable. Indeed, for any uniformly bounded representation $\pi$ we can define an equivalent and invariant norm by the formula
$$\|x\|_{\rm new}=\sup_g \|\pi(g)x\|.$$ A second observation is that $(2,2)$-isometrisability is exactly the usual unitarisability.

As mentioned above, $(p,q)$-isometrisability makes only sense when $1\leq q\leq p \leq 2$ or $2 \leq p \leq q < \infty$. The following lemma shows that we may restrict ourselves to the first case by duality.

\begin{lemma} Let $1<p,q<\infty$.
A group $\Gamma$ is $(p,q)$-isometrisable if and only if it is $(p',q')$-isometrisable, where $p',q'$ are the conjugate exponents. 
\end{lemma}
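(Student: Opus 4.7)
The plan is to use duality. Given a uniformly bounded representation $\pi \colon \Gamma \to {\rm GL}(X)$ on a Banach space $X$, write $\widetilde{\pi}(g) := \pi(g^{-1})^*$ for the contragredient representation on $X^*$. This is again uniformly bounded, since $\|\widetilde{\pi}(g)\| = \|\pi(g^{-1})\|$. By Lemma \ref{spaces}(3), if $X$ is a $p$-space then $X^*$ is a $p'$-space. Moreover, by Lemma \ref{spaces}(4), $X$ is reflexive, so under the canonical identification $X^{**} = X$ the representation $\widetilde{\widetilde{\pi}}$ is identified with $\pi$.

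Now suppose $\Gamma$ is $(p,q)$-isometrisable, and take any uniformly bounded representation $\sigma \colon \Gamma \to {\rm GL}(Y)$ on a $p'$-space $Y$. Then $\widetilde{\sigma}$ is a uniformly bounded representation on the $p$-space $Y^*$, so by hypothesis there is an equivalent norm $\|\cdot\|_q$ on $Y^*$ making $Y^*$ into a $q$-space and making $\widetilde{\sigma}$ isometric. Consider the dual norm $\|\cdot\|_q^*$ on $Y^{**} = Y$. I would verify three things: (i) $\|\cdot\|_q^*$ is equivalent to the original norm on $Y$, which follows from the standard fact that dual norms of equivalent norms are equivalent; (ii) $(Y, \|\cdot\|_q^*)$ is a $q'$-space, which follows from Lemma \ref{spaces}(3) since it is the dual of a $q$-space; (iii) $\sigma$ acts isometrically, because $\sigma(g) = \widetilde{\sigma}(g^{-1})^*$ under the reflexive identification, and the adjoint of an isometry (with respect to the dual norm) is an isometry. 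This shows that $\Gamma$ is $(p',q')$-isometrisable. The converse is identical with the roles of $(p,q)$ and $(p',q')$ swapped, using that $p''=p$ and $q''=q$.

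The arguments are all standard functional-analytic manipulations, and the only mildly delicate point is the need for reflexivity to identify $\widetilde{\widetilde{\sigma}}$ with $\sigma$ and to pass the new norm from $Y^*$ back to $Y$; this is precisely guaranteed by Lemma \ref{spaces}(4). Hence I do not anticipate a serious obstacle — the proof is essentially a bookkeeping exercise in dualising norms and representations, justified at each step by the parts of Lemma \ref{spaces}.
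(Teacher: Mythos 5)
Your duality argument is correct, and it is precisely the argument the paper intends: the lemma is stated there without proof, the duality being regarded as immediate from Lemma \ref{spaces} (parts (3) and (4)). The one point worth making explicit, which you essentially do, is that reflexivity survives the equivalent renorming of $Y^*$, so the canonical map $Y \to (Y^*, \|\cdot\|_q)^*$ is still onto and the dual of the new $q$-norm really lives on $Y$ itself.
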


In view of Lemma \ref{spaces} it is clear that for $1 \leq p \leq 2$, $(p,q)$-isometrisability implies $(p,r)$-isometrisability for every $1 \leq r \leq q$. We say that a group is $p$-isometrisable when it is $(p,p)$-isometrisable.

\begin{theorem}
Amenable groups are $p$-isometrisable for any $p\in (1,\infty)$.
\end{theorem}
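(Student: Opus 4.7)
The plan is to mimic the classical Day--Dixmier--Nakamura averaging argument that handles the Hilbert space case, replacing squares by $p$-th powers. Let $\pi \colon \Gamma \to {\rm GL}(X)$ be a uniformly bounded representation on a $p$-space $X$, with $C := \sup_{g}\|\pi(g)\| < \infty$. Since $\Gamma$ is amenable, there exists a left-invariant mean $m$ on $\ell^\infty(\Gamma)$. I would define a candidate new norm by the formula
$$\|x\|_{\mathrm{new}} := m_g\bigl(\|\pi(g)x\|_X^p\bigr)^{1/p}.$$
By $C^{-1}\|x\|_X \leq \|\pi(g)x\|_X \leq C\|x\|_X$ for all $g$, this gives $C^{-1}\|x\|_X \leq \|x\|_{\mathrm{new}} \leq C\|x\|_X$, so the new norm is equivalent to the original and in particular non-degenerate. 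Left-invariance of $m$ immediately yields $\|\pi(h)x\|_{\mathrm{new}} = \|x\|_{\mathrm{new}}$ for every $h \in \Gamma$, so $\pi$ becomes isometric.

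Next I would verify that $\|\cdot\|_{\mathrm{new}}$ is actually a norm. Homogeneity is immediate from linearity of $m$. The triangle inequality is a Minkowski inequality for means: since $m$ is a positive linear functional on $\ell^\infty(\Gamma)$, it behaves like integration against a finitely additive probability measure, and the standard approximation-by-simple-functions (or duality with $q = p/(p-1)$) argument shows
$$m\bigl((f+h)^p\bigr)^{1/p} \leq m(f^p)^{1/p} + m(h^p)^{1/p}$$
for non-negative bounded functions. Applying this to $f(g) = \|\pi(g)x\|_X$ and $h(g) = \|\pi(g)y\|_X$, combined with the triangle inequality for the original norm, gives $\|x+y\|_{\mathrm{new}} \leq \|x\|_{\mathrm{new}} + \|y\|_{\mathrm{new}}$.

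The key step is to check that $(X, \|\cdot\|_{\mathrm{new}})$ is still a $p$-space, using the intrinsic characterization from the paper. Given a matrix $M \in M_n(\mathbb{C})$ satisfying the scalar contraction condition $\sum_i |\sum_j M_{ij}r_j|^p \leq \sum_k |r_k|^p$, and vectors $x_1,\ldots,x_n \in X$, compute
\begin{align*}
\sum_{i=1}^n \Bigl\|\sum_j M_{ij}x_j\Bigr\|_{\mathrm{new}}^p &= \sum_{i=1}^n m_g\Bigl(\Bigl\|\sum_j M_{ij}\pi(g)x_j\Bigr\|_X^p\Bigr) \\
&= m_g\Bigl(\sum_{i=1}^n \Bigl\|\sum_j M_{ij}\pi(g)x_j\Bigr\|_X^p\Bigr) \\
&\leq m_g\Bigl(\sum_{k=1}^n \|\pi(g)x_k\|_X^p\Bigr) = \sum_{k=1}^n \|x_k\|_{\mathrm{new}}^p,
\end{align*}
where the second equality is linearity of $m$ on finite sums, and the inequality is the $p$-space property of $X$ applied pointwise in $g$. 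This is exactly the defining property, so $(X,\|\cdot\|_{\mathrm{new}})$ is a $p$-space.

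The only genuinely subtle point I anticipate is the Minkowski inequality for a finitely additive mean, but this is well established (and in any case one can circumvent it by observing that what we really need is for $\|\cdot\|_{\mathrm{new}}$ to be a $p$-norm in the sense of Kwapie\'n's characterization, which together with non-degeneracy and homogeneity implies it defines a Banach $p$-space structure). Everything else is a direct averaging argument using the two defining properties of the invariant mean: linearity/positivity and left-invariance.
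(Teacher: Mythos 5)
Your proposal is correct and follows essentially the same route as the paper: averaging $\|\pi(g)x\|^p$ against a left-invariant mean, checking equivalence and invariance, and verifying the Kwapie\'n intrinsic $p$-space condition by pulling the finite sum inside the mean and applying the $p$-space inequality pointwise in $g$. The only difference is that you spell out the norm axioms (Minkowski for the mean) which the paper dismisses as clear; this is a harmless elaboration, not a different argument.
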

\begin{proof}
Let $\Gamma$ be an amenable group and let $m\colon \ell^{\infty}(\Gamma) \to \mathbb{C}$ be an left-invariant mean. Let $\pi\colon \Gamma\to {\rm GL}(X)$ be a uniformly bounded representation of $\Gamma$ on a $p$-space $X$. Since $\pi$ is uniformly bounded, then for any fixed $x\in X$ we have $\|\pi(g)x\|^p \leq \|\pi(g)\|^p\|x\|^p$ and hence $\left(g\mapsto \|\pi(g)x\|^p\right) \in \ell^{\infty}(\Gamma)$. Thus we can  define a new norm by the formula:
$$\|x\|_{\rm new}=\sqrt[p]{m\left(g\mapsto \left\Vert\pi(g)x\right\Vert^p\right)}.$$

Clearly it is a norm and easily seen to be equivalent to the original norm. Moreover, we prove that it is a $p$-norm. Indeed, finite additivity and positivity of $m$ implies
\begin{eqnarray*}
\sum_{i=1}^n m\left(\left\Vert\sum_{j=1}^n \pi(g)M_{ij}b_j\right\Vert ^p\right) 
&=&m\left(\sum_{i=1}^n\left(\left\Vert\sum_{j=1}^n \pi(g)M_{ij}b_j\right\Vert^p\right)\right)\\
&\leq& m\left(\sum_{k=1}^n \left\Vert\pi(g)b_k\right\Vert^p\right)\\
&=&\sum_{k=1}^n \left\Vert\pi(g)b_k\right\Vert^p_{\rm new}.
\end{eqnarray*}

Finally, the invariance of $m$ easily implies that $\pi$ is isometric with respect to this norm.
\end{proof}

One could ask if in the definition of $p$-isometrisability we can replace finding a new $p$-norm by finding a bounded invertible operator $S$ such that $S^{-1}\pi(g)S$ is isometric for any $g\in \Gamma$ and indeed, if $p=2$ these conditions are equivalent. However, this is not true for $p\neq 2$. It was proven in \cite{Gillaspie}, that there exists a uniformly bounded representation of $\mathbb{Z}$ on $L^p$-space for which there is no $S$ such that $S^{-1}\pi(g)S$ is an isometry. Nevertheless, the previous theorem tells us that $\mathbb{Z}$ is $p$-isometrisable.

\begin{lemma} \label{induction}
Let $H$ be a subgroup of $\Gamma$. If $\Gamma$ is $(p,q)$-isometrisable, then $H$ is also $(p,q)$-isometrisable.
\end{lemma}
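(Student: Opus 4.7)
The plan is to adapt the standard induced-representation argument from the unitarisability setting. Given a uniformly bounded representation $\sigma \colon H \to {\rm GL}(X)$ on a $p$-space $X$, I would like to build a uniformly bounded representation $\pi = {\rm Ind}_H^\Gamma \sigma$ on a suitable $p$-space $Y$, apply $(p,q)$-isometrisability of $\Gamma$ to obtain an equivalent $q$-norm on $Y$ with respect to which $\pi$ is isometric, and finally restrict this $q$-norm along an $H$-equivariant isometric embedding $X \hookrightarrow Y$.

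Concretely, fix a set of coset representatives for $\Gamma/H$ and let
$$Y = \{f \colon \Gamma \to X \ : \ f(gh) = \sigma(h)^{-1} f(g) \text{ for all } g\in\Gamma,\ h\in H\}$$
with norm $\|f\|^p = \sum_{tH \in \Gamma/H} \|f(t)\|^p$ (independent of the choice of representatives since $\sigma$ moves $X$ inside itself). This is isometric to $\ell^p(\Gamma/H, X)$ and hence is a $p$-space (using Lemma \ref{spaces}, suitably extended to infinite $\ell^p$-sums, which is standard). Define $\pi$ by left translation, $(\pi(g)f)(x) = f(g^{-1}x)$. A direct calculation, using the equivariance condition and the identity $\sum_{tH} \|\sigma(h_t)^{-1} f(s_t)\|^p \leq (\sup_{h\in H}\|\sigma(h^{-1})\|)^p \|f\|^p$ where $g^{-1}t = s_t h_t$ with $s_t$ in the transversal, shows $\|\pi(g)\| \leq \sup_{h\in H}\|\sigma(h)\| < \infty$, so $\pi$ is uniformly bounded.

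Next, define the embedding $\iota \colon X \to Y$ by setting $\iota(x)$ to be the function supported on $H$ with $\iota(x)(h) = \sigma(h)^{-1} x$ for $h \in H$ (and $\iota(x)(g) = 0$ for $g \notin H$). Using $e$ as the representative of $H$ we see $\|\iota(x)\| = \|x\|$, so $\iota$ is an isometric embedding. A short check shows $\pi(h_0)\iota(x) = \iota(\sigma(h_0)x)$ for all $h_0 \in H$, i.e. $\iota$ intertwines $\sigma$ and $\pi|_H$, and its image $X_0 = \iota(X)$ is a closed $H$-invariant subspace of $Y$.

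Now apply the hypothesis: $\Gamma$ being $(p,q)$-isometrisable yields an equivalent $q$-norm $\|\cdot\|_{\rm new}$ on $Y$ such that every $\pi(g)$ is isometric. Restrict $\|\cdot\|_{\rm new}$ to $X_0$ (closed in the original norm, hence closed in the equivalent $\|\cdot\|_{\rm new}$) and transport along $\iota$ to obtain a norm on $X$. By Lemma \ref{spaces}(1), a closed subspace of a $q$-space is again a $q$-space, so this is a $q$-norm; it is equivalent to the original norm since $\|\cdot\|_{\rm new}$ is equivalent to the original norm on $Y$; and $\sigma$ is isometric in it because $\pi|_H$ is. The main (and only) place where one has to be careful is the set-up: checking that $Y$ is genuinely a $p$-space (requires extending Lemma \ref{spaces}(5) to an arbitrary $\ell^p$-sum, which is standard for ${\rm QSL}_p$-spaces) and that the embedding $\iota$ is simultaneously isometric and $H$-equivariant. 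Once that is in place, the conclusion is immediate.
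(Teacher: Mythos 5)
Your argument is correct and is essentially the paper's proof carried out in detail: the paper simply invokes the classical induced-representation construction (Pisier, Theorem 2.8) with the direct sum of Hilbert spaces replaced by an $\ell^p$-sum of $p$-spaces, which is exactly what you build, together with the restriction of the new $q$-norm along the $H$-equivariant isometric copy of $X$. One small slip: the norm $\|f\|^p=\sum_{tH}\|f(t)\|^p$ is \emph{not} independent of the choice of coset representatives when $\sigma$ is merely uniformly bounded (only equivalent norms arise), but this is harmless --- fixing one transversal once and for all gives a space isometric to $\ell^p(\Gamma/H,X)$ and the rest of your argument goes through unchanged.
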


The classical construction of the induced representation (see for example Theorem 2.8 in \cite{Pisier}) remains valid for uniformly bounded representations on $p$-spaces with an only tiny change that instead of using a direct sum of Hilbert spaces one should use a $p$-sum of $p$-spaces, which is again a $p$-space.

\section{Our friend the free group}

In this section we prove our second main result.
\begin{theorem}\label{free}
For any $p\in (1,\infty)$ and any $r\geq 2$, the free group $\mathbb{F}_r$ on $r$ generators is not $(2,p)$-isometrisable. In particular, $\mathbb{F}_r$ is not $p$-isometrisable for any $p \in (1,\infty)$.

\end{theorem}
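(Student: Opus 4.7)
The plan is to reduce to $r=2$ and then exhibit a single uniformly bounded representation of $\mathbb{F}_2$ on a Hilbert space that refutes $(2,p)$-isometrisability for every $p\in(1,\infty)$ simultaneously, by combining an upper-triangular construction with a Ryll-Nardzewski-type fixed-point argument. The reduction to $r=2$ is immediate from Lemma~\ref{induction}, since $\mathbb{F}_2$ is a subgroup of $\mathbb{F}_r$ for all $r\geq 2$. The ``in particular'' clause is also automatic: a Hilbert space is a $p$-space for every $p\in(1,\infty)$ by Lemma~\ref{spaces}, so failure of $(2,p)$-isometrisability for some representation on a Hilbert space forces, a fortiori, failure of $p$-isometrisability.

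The representation $\pi$ is constructed by the classical upper-triangular device. Since $\mathbb{F}_2$ is not unitarisable (Pytlik-Szwarc), the Bożejko-style triangular decomposition of uniformly bounded representations produces a unitary representation $\pi_0\colon \mathbb{F}_2\to {\rm U}(\mathcal{H}_0)$ together with a bounded $1$-cocycle $b\colon \mathbb{F}_2\to \mathcal{H}_0$ (that is, $b(gh)=\pi_0(g)b(h)+b(g)$ with $\sup_g\|b(g)\|<\infty$) that is \emph{not} a coboundary: no $\xi\in \mathcal{H}_0$ satisfies $b(g)=\xi-\pi_0(g)\xi$. On the Hilbert space $\mathcal{H}:=\mathcal{H}_0\oplus\mathbb{C}$ put
\[
\pi(g) \;=\; \begin{pmatrix} \pi_0(g) & b(g) \\ 0 & 1 \end{pmatrix}.
\]
The cocycle identity makes $\pi$ a homomorphism, and since $\pi_0$ is unitary and $b$ is bounded, $\pi$ is uniformly bounded.

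Now fix $p\in(1,\infty)$ and assume, for contradiction, that $\pi$ is $(2,p)$-isometrisable: let $\|\cdot\|_{\rm new}$ be an equivalent $p$-norm on $\mathcal{H}$ under which $\pi$ is isometric, and write $X$ for the resulting $p$-space. By Lemma~\ref{spaces}(\ref{spaces2}), $X$ is reflexive. The orbit $\pi(\mathbb{F}_2)(0,1)=\{(b(g),1):g\in\mathbb{F}_2\}$ is bounded in $\|\cdot\|_{\rm new}$ because $\pi$ acts isometrically, so its closed convex hull $K\subseteq X$ is bounded, closed, convex, and hence weakly compact. The cocycle identity yields $\pi(h)(b(g),1)=(b(hg),1)$, so $K$ is $\pi$-invariant and $\pi(\mathbb{F}_2)$ acts on $K$ by affine isometries, in particular non-contractively. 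The Ryll-Nardzewski fixed-point theorem then produces a common fixed point $(w,1)\in K$, and the equation $\pi(g)(w,1)=(w,1)$ rearranges to $b(g)=w-\pi_0(g)w$, contradicting the non-triviality of $b$.

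The main obstacle is securing the bounded, non-coboundary cocycle $b$ for a unitary representation of $\mathbb{F}_2$; this is exactly the cohomological translation, due to Bożejko, of the classical non-unitarisability of $\mathbb{F}_2$, and it can be read off any Pytlik-Szwarc-style construction. Once this input is in hand, the rest of the argument is soft and uniform in $p$: reflexivity of $p$-spaces together with the Ryll-Nardzewski theorem deliver the contradiction, so the same representation $\pi$ refutes $(2,p)$-isometrisability for every $p\in(1,\infty)$ simultaneously.
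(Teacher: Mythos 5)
Your reductions are fine (passing to $\mathbb{F}_2\leq\mathbb{F}_r$ via Lemma~\ref{induction}, and using that a Hilbert space is a $p$-space for every $p$ by Lemma~\ref{spaces}), but the core input of your construction does not exist, so the argument collapses. You require a unitary representation $\pi_0$ of $\mathbb{F}_2$ together with a \emph{bounded} $1$-cocycle $b\colon\mathbb{F}_2\to\mathcal{H}_0$ that is not a coboundary. No group admits such a pair: boundedness of $b$ means the affine isometric action $g\cdot\xi=\pi_0(g)\xi+b(g)$ has bounded orbits, and the circumcentre of a bounded orbit (the Bruhat--Tits lemma, valid in any Hilbert, indeed any uniformly convex, space) is a fixed point, so $b(g)=w-\pi_0(g)w$ automatically. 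Equivalently, your own Ryll--Nardzewski argument, run with the \emph{original} Hilbert norm instead of the hypothetical new $p$-norm, already produces the fixed point; the hypothesised object refutes itself, and the triangular representation $\left(\begin{smallmatrix}\pi_0 & b\\ 0 & 1\end{smallmatrix}\right)$ is always similar to $\pi_0\oplus 1$, hence unitarisable and uninteresting here. The genuine cohomological witness of non-unitarisability of $\mathbb{F}_2$ (Bo\.zejko, Pytlik--Szwarc; see Pisier's book) is a bounded non-inner \emph{operator-valued derivation} $\delta\colon\mathbb{F}_2\to B(\mathcal{H}_2,\mathcal{H}_1)$ between two infinite-dimensional unitary representations; there the averaging/fixed-point argument breaks precisely because the relevant bounded convex sets live in a non-reflexive operator space and are not weakly compact. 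This also signals a structural problem with your strategy: a purely soft fixed-point argument of this type cannot distinguish isometrisability on $p$-spaces for different $p$, since it never uses any property of the new norm beyond equivalence and invariance.

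The paper's proof is, by contrast, quantitative. Lemma~\ref{spec} shows that if the free generators act by isometries of a $p$-space then the spectrum of the Markov operator $\mu_1=\frac{1}{2r}\sum_i(g_i+g_i^{-1})$ is confined to the horizontal strip $|{\rm Im}(z)|\leq\theta$ with $\theta=\frac{2}{p}-1$ (after reducing to $p\in(1,2]$ by duality), via a Riesz--Thorin estimate on the matrices $A(\alpha)$. Pytlik's analytic family $\pi_z$ supplies uniformly bounded representations of $\mathbb{F}_r$ on the Hilbert space $L^2(\Omega,\mu)$ for which $z\in{\rm sp}(\pi_z(\mu_1))$, with $|{\rm Im}(z)|$ as close to $\frac{r-1}{r}$ as desired; choosing $r$ with $\theta<\frac{r-1}{r}$ and $z_0\in E$ with $|{\rm Im}(z_0)|>\theta$ yields the contradiction, and Lemma~\ref{induction} transports the failure to every group containing a non-abelian free subgroup. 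To salvage a cocycle-style approach you would need operator-valued derivations together with some genuinely $p$-dependent estimate replacing the fixed-point step --- which is exactly the role the spectral estimate plays in the paper.
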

Firstly we need to prove the following lemma.
\begin{lemma}\label{estimate}
Let $A(\alpha)= \left(\begin{smallmatrix}
1 & i\alpha \\
i\alpha & 1 
\end{smallmatrix} \right) \in M_2(\mathbb{C})$, where $\alpha$ is some real number. Then for $p\in [1,2]$ we have
$$\|A\|_{p\to p} \leq 1+\theta|\alpha|+o(|\alpha|),$$ where $\theta=\frac{2}{p}-1$.
\end{lemma}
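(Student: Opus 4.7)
The plan is to deduce the estimate by Riesz--Thorin interpolation between the two endpoint exponents $p=1$ and $p=2$, where the norm of $A(\alpha)$ can be computed essentially by hand.

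First I would handle the endpoint $p=1$ by noting that the $\ell^1\to\ell^1$ norm of a matrix equals its maximum column sum, which for $A(\alpha)$ gives exactly
\[
\|A(\alpha)\|_{1\to 1}=1+|\alpha|.
\]
For the endpoint $p=2$, I would compute
\[
A(\alpha)^{*}A(\alpha)=\begin{pmatrix}1+\alpha^{2}&0\\0&1+\alpha^{2}\end{pmatrix}=(1+\alpha^{2})\,I,
\]
using that $\alpha$ is real (so the off-diagonal contributions cancel), from which $\|A(\alpha)\|_{2\to 2}=\sqrt{1+\alpha^{2}}$.

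Next I would invoke the Riesz--Thorin interpolation theorem: if $\tfrac{1}{p}=\tfrac{1-t}{1}+\tfrac{t}{2}$, then
\[
\|A(\alpha)\|_{p\to p}\leq \|A(\alpha)\|_{1\to 1}^{\,1-t}\cdot\|A(\alpha)\|_{2\to 2}^{\,t}.
\]
Solving $\tfrac{1}{p}=1-\tfrac{t}{2}$ gives $1-t=\tfrac{2}{p}-1=\theta$, so that
\[
\|A(\alpha)\|_{p\to p}\leq (1+|\alpha|)^{\theta}\,(1+\alpha^{2})^{(1-\theta)/2}.
\]
Finally I would Taylor expand in $|\alpha|$: since $(1+|\alpha|)^{\theta}=1+\theta|\alpha|+O(\alpha^{2})$ and $(1+\alpha^{2})^{(1-\theta)/2}=1+O(\alpha^{2})$, the product equals $1+\theta|\alpha|+o(|\alpha|)$ as $\alpha\to 0$, which is exactly the claimed bound.

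There is not really a serious obstacle here; the only subtle point is verifying that the off-diagonal entries in $A^{*}A$ vanish (which uses that $\alpha$ is real, so $\overline{i\alpha}=-i\alpha$), and keeping track of which endpoint exponent is paired with which weight so that the interpolation coefficient correctly reproduces $\theta=\tfrac{2}{p}-1$. Once those are in place the rest is a one-line Taylor expansion.
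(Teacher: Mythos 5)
Your proof is correct and follows essentially the same route as the paper: Riesz--Thorin interpolation between the endpoints $\|A(\alpha)\|_{1\to 1}=1+|\alpha|$ and $\|A(\alpha)\|_{2\to 2}=\sqrt{1+\alpha^2}$, with the interpolation weight $\theta=\tfrac{2}{p}-1$. The only cosmetic difference is that the paper finishes with the inequality $x^{\theta}y^{1-\theta}\leq\theta x+(1-\theta)y$ to get an explicit bound $1+\theta|\alpha|+(1-\theta)\tfrac{\alpha^2}{2}$, whereas you Taylor expand; both give the stated $1+\theta|\alpha|+o(|\alpha|)$.
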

\begin{proof}
To estimate $\|A\|_{p\to p}$ we will use Riesz-Thorin interpolation theorem. For $p\in [1,2]$, we have 
$$\frac{1}{p}=\frac{\theta}{1}+\frac{1-\theta}{2}$$
with $\theta=\frac{2}{p}-1$. Thus, using the inequality $x^{\theta}y^{1-\theta}\leq \theta x+(1-\theta)y$, we obtain
$$\|A\|_{p\to p} \leq \|A\|^{\theta}_{1\to 1}\|A\|^{1-\theta}_{2\to 2}\leq\left(1+|\alpha|\right)^{\theta}\left(1+\frac{\alpha^2}{2}\right)^{1-\theta}\leq 1+\theta|\alpha|+(1-\theta)\frac{\alpha^2}{2}.$$
Here we used the fact that $\|A\|_{2\to 2} =\sqrt{1+\alpha^2}\leq 1+\frac{\alpha^2}{2}$.
\end{proof}

We denote by $B(x,r)$ the disk in $\mathbb C$ with center $x$ and radius $r$.
\begin{lemma}\label{spec}
Let $g_i$ for $ 1\leq i \leq r$ be isometries of a $p$-space $X$, then the spectrum of the operator $$\mu_1=\frac{1}{2r}\sum_{i=1}^r(g_i+g_i^{-1})$$ lies in the intersection $$B(0,1) \cap \{z \in \mathbb{C} \mid -\theta \leq \rm{Im(z)}\leq \theta\},$$ where $\theta=\frac{2}{p}-1.$
\end{lemma}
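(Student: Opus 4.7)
My plan is to combine an operator-exponential estimate with the spectral mapping theorem. The disk containment is immediate from the triangle inequality $\|\mu_1\| \leq \frac{1}{2r}\sum_{i=1}^r(\|g_i\| + \|g_i^{-1}\|) = 1$. For the horizontal strip I aim to prove
$$\|\exp(i\alpha\mu_1)\|_{X \to X} \leq e^{\theta|\alpha|} \qquad \text{for every } \alpha \in \mathbb{R}.$$
Once this is in hand, the spectral mapping theorem gives $e^{i\alpha z} \in \mathrm{spec}(\exp(i\alpha\mu_1))$ for every $z = s+it \in \mathrm{spec}(\mu_1)$, so $e^{-\alpha t} = |e^{i\alpha z}| \leq e^{\theta|\alpha|}$ for all real $\alpha$, which forces $|t| \leq \theta$.

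Since $\exp(i\alpha\mu_1) = \lim_{n \to \infty}(I + i\alpha\mu_1/n)^n$ in operator norm and $(1 + \theta|\alpha|/n + o(1/n))^n \to e^{\theta|\alpha|}$, the exponential bound reduces to the infinitesimal estimate
$$\|I + i\alpha\mu_1\|_{X \to X} \leq 1 + \theta|\alpha| + o(|\alpha|) \quad \text{as } \alpha \to 0.$$
To produce it I would introduce, on $X \oplus_p X$, the block operator $N_i(\alpha) = \bigl(\begin{smallmatrix} I & i\alpha g_i \\ i\alpha g_i^{-1} & I \end{smallmatrix}\bigr)$. The substitution $u_1 = x_1$, $u_2 = g_i x_2$ is an isometric bijection of $X \oplus_p X$ onto itself (because $g_i$ is an isometry), and it turns $\|N_i(x_1,x_2)\|^p$ into $\|u_1 + i\alpha u_2\|^p + \|i\alpha u_1 + u_2\|^p$ (the factor $g_i^{-1}$ in the bottom row disappears for the same reason). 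This is precisely the action of the scalar matrix $A(\alpha)$ of Lemma \ref{estimate} on $(u_1,u_2) \in X \oplus_p X$, so the intrinsic contractivity axiom of $p$-spaces together with Lemma \ref{estimate} yields $\|N_i(\alpha)\| \leq \|A(\alpha)\|_{p \to p} \leq 1 + \theta|\alpha| + o(|\alpha|)$.

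To promote these per-generator bounds to a bound on $\mu_1$ itself, I would assemble the block-diagonal operator $N(\alpha) = N_1(\alpha) \oplus \cdots \oplus N_r(\alpha)$ on the $p$-space $Y$ given by the $\oplus_p$-direct sum of $2r$ copies of $X$ (a $p$-space by Lemma \ref{spaces}(5)), whose norm is at most $\max_i \|N_i(\alpha)\|$. Combined with the diagonal embedding $\Delta : X \to Y$, $x \mapsto (x,\ldots,x)$, of norm $(2r)^{1/p}$, and the averaging map $P : Y \to X$, $(y_1,\ldots,y_{2r}) \mapsto \frac{1}{2r}\sum_k y_k$, of norm at most $(2r)^{-1/p}$ by H\"older's inequality, a direct computation produces the factorisation $I + i\alpha\mu_1 = P \circ N(\alpha) \circ \Delta$. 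Thus $\|I + i\alpha\mu_1\| \leq \|P\|\,\|N(\alpha)\|\,\|\Delta\| \leq 1 + \theta|\alpha| + o(|\alpha|)$, closing the loop. The main technical hurdle is the change-of-variables step: without exploiting that both $g_i$ and $g_i^{-1}$ are isometries one cannot reduce the operator-valued block matrix $N_i(\alpha)$ to the scalar matrix $A(\alpha)$, and the intrinsic $p$-space axiom would be inapplicable. The case $p > 2$ (where Lemma \ref{estimate} does not directly apply and $\theta$ should be read as $|2/p - 1|$) follows from the case $p < 2$ applied to the dual representation on the $q$-space $X^*$, on which the adjoints $g_i^*$ remain invertible isometries.
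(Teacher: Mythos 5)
Your proof is correct and takes essentially the same route as the paper's: both arguments come down to factoring $I+i\alpha\mu_1$ through $\ell^p_{2r}(X)$ with the $(2r)^{1/p}$-versus-$(2r)^{-1/p}$ H\"older bookkeeping, applying the intrinsic $p$-space contraction property to the scalar matrix $A(\alpha)$ of Lemma \ref{estimate}, and extracting the strip from a spectral-mapping argument in the limit $\alpha\to 0$. The only repackaging is that you fold the isometries $g_i,g_i^{-1}$ into the block operators $N_i(\alpha)$ by an isometric change of variables and pass through $\exp(i\alpha\mu_1)$, whereas the paper keeps the middle matrix scalar (placing the isometries in the outer row and column factors) and applies the spectral mapping theorem directly to the affine map $z\mapsto 2r(1+i\alpha z)$; your closing duality remark for $p>2$ is a reasonable way to handle the case the paper implicitly restricts away.
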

\begin{proof} First of all, it is clear that ${\rm sp}(\mu_1)$ lies in $B(0,1)$ since $\mu_1$ acts as a contraction.
Let us define a function $\Phi_{\alpha}\colon \mathbb{C} \to \mathbb{C}$ by the formula $\Phi_{\alpha}(z)=2r(1+i\alpha z)$ and consider $\Phi_{\alpha}(\mu_1)=2r+i\alpha\sum_{i=1}^r(g_i+g_i^{-1})$, where $\alpha$ is some real number.
Let us note that we can write this operator  in a following way:

$$\Phi_{\alpha}(\mu_1)=\begin{pmatrix}
1 & g_1 & \ldots & 1 & g_r\\
\end{pmatrix} \begin{pmatrix}
A(\alpha) & 0 & \dots& 0\\
0 & A(\alpha) & \dots & 0\\
\vdots & \vdots &\ddots & \vdots\\
0 & 0 & \dots & A(\alpha)
\end{pmatrix}\begin{pmatrix}
1\\
g_1^{-1}\\
\vdots\\
1\\
g_r^{-1}\\
\end{pmatrix}=x^tA_r(\alpha)y,$$
where $y\in L(X,\ell^p_{2r}(X))$ and $x^t\in L(\ell^p_{2r}(X),X)$.
Hence, 
$$\left\Vert \Phi_{\alpha}(\mu_1)\right\Vert \leq \|x^T\|\|A_r\|_{p \to p}\|y\|.$$
We can estimate the norm of $y\in L(X,\ell^p_{2r}(X))$ by $\|y\|\leq (2r)^{\frac{1}{p}}$. Indeed, for any $v\in X$ we have
$$\sqrt[p]{\sum_{i=1}^r(\|v\|^p+\|g_i^{-1}v\|^p)}\leq (2r)^{\frac{1}{p}}\|v\|.$$ 
By duality, we obtain $\|x^t\|\ \leq (2r)^{\frac{p-1}{p}}$.
Clearly, $\|A_r\|_{p\to p}=\|A(\alpha)\|_{p\to p}$ and hence by Lemma \ref{estimate} we have
$$\|A_r\|\leq 1+\theta|\alpha|+o(|\alpha|),$$
where $\theta=\frac{2}{p}-1$ if $p\in [1,2]$.
Putting everything together, we obtain
$$\Phi_{\alpha}({\rm sp}(\mu_1))={\rm sp}(\Phi_{\alpha}(\mu_1)) \subset B(0,2r(1+|\alpha|\theta+o(|\alpha|))).$$
Hence, for positive $\alpha$ we have
$${\rm sp}(\mu_1)\subset \Phi_{\alpha}^{-1}(B(0,2r(1+|\alpha|\theta+o(|\alpha|)))) \subset \left\{z\in \mathbb{C}\mid {\rm Im}(z) \geq -\left(\theta +\frac{o(|\alpha|)}{|\alpha|}\right)\right\}.$$
Since $\alpha$ can be chosen arbitrarily small we obtain
$${\rm sp} (\mu_1) \subset \{z\in \mathbb{C}\mid \rm{Im}(z) \geq -\theta\}.$$
By symmetry we get ${\rm sp}(\mu_1) \subset B(0,1) \cap \{z \in \mathbb{C} \mid -\theta \leq \rm{Im}(z)\leq \theta\}$ as claimed.
\end{proof}

In order to prove our main theorem, we will now make use of the family of non-unitarisable uniformly bounded representations of $\mathbb{F}_r$  constructed by Pytlik in \cite{Pytlik}. Let us briefly recall his construction. Pytlik constructs an analytic series of uniformly bounded representatinons $\pi_z$ of $\mathbb{F}_r$, defined through the action of $\mathbb{F}_r$ on its Poisson boundary $(\Omega,\mu)$ with respect to the canonical simple random walk. These representations $\pi_z\colon \mathbb{F}_r\to {\rm GL}(L^2(\Omega,\mu))$ are indexed by complex numbers from the ellipse
$$E=\left\{ z\in\mathbb{C} : \left\vert z-\frac{\sqrt{2r-1}}{r}\right\vert +\left\vert z+\frac{\sqrt{2r-1}}{r}\right\vert <2  \right\}.$$
Let us summarize  the properties of $\pi_z$.
\begin{theorem}[\cite{Pytlik}]\label{rep}
The representations $\pi_z, z\in E$ form an analytic family of uniformly bounded representations of $\Gamma$ on the Hilbert space $L^2(\Omega, \mu)$. Moreover:
\begin{enumerate}
\item Each $\pi_z$ is irreducible.
\item $\pi_z^*(x)=\pi_{\bar{z}}(x^{-1})$. In particular, $\pi_z$ is a unitary representation if $z$ is real.
\item The spectrum in $L^2(\Omega, \mu)$ of the operator $\pi_z(\mu_1)$ is contained in the set $$\{z\}\cup \left[-\frac{\sqrt{2r-1}}{r}, \frac{\sqrt{2r-1}}{r}\right].$$ 
\item The eigenspace corresponding to $\{z\}$ is one-dimensional and consists of constant functions.
\end{enumerate}
\end{theorem}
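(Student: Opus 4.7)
The plan is to realize $\pi_z$ explicitly on the boundary and then verify the four assertions in sequence. Identify $\Omega$ with the space of infinite reduced words in the standard generators of $\mathbb{F}_r$, and $\mu$ with the Markov hitting measure of the simple random walk, so that $\mu(C_w) = (2r)^{-1}(2r-1)^{-(n-1)}$ on the cylinder set $C_w$ of a reduced word $w$ of length $n \geq 1$. Pytlik's representations are defined by
\[
(\pi_z(g)f)(\omega) = c_z(g,\omega)\, f(g^{-1}\omega),
\]
where $c_z$ is an explicit Poisson/Busemann-type cocycle in $g$ depending holomorphically on $z$, normalized so that for suitable real values of $z$ it coincides with the unitary quasi-regular representation associated with the spherical principal series. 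The cocycle identity for $c_z$ yields the representation law, while direct substitution gives the adjoint relation $\pi_z^*(g) = \pi_{\bar z}(g^{-1})$ of assertion (2); specializing to real $z$ then forces $\pi_z$ to be unitary.

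Uniform boundedness and analyticity across the ellipse $E$ are established by writing
\[
\pi_z(g) = \pi_{z_0}(g) + R_z(g),
\]
for a fixed real reference parameter $z_0$ at which $\pi_{z_0}$ is unitary, with $R_z(g)$ a correction supported on the cylinder sets determined by the initial segment of $g$. The operator norm of $R_z(g)$ is then controlled by summing a geometric series whose ratio is governed precisely by the ellipse condition $|z - \sqrt{2r-1}/r| + |z + \sqrt{2r-1}/r| < 2$, yielding a bound on $\|\pi_z(g)\|$ independent of $g$. Holomorphy in $z$ is transparent from the defining formula, giving the analytic family structure.

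For assertions (3) and (4), a direct computation from the definition of $\mu_1$ shows $\pi_z(\mu_1)\mathbf{1} = z\,\mathbf{1}$, so $z$ is an eigenvalue with eigenvector the constant function $\mathbf{1}$. For the remaining spectrum, restrict to $\mathbf{1}^\perp$ and observe that for real $z$ the restriction is unitarily equivalent to the Markov operator of the simple random walk acting on a natural subspace whose spectrum is the Kesten interval $[-\sqrt{2r-1}/r, \sqrt{2r-1}/r]$; analytic continuation in $z$ preserves this inclusion throughout $E$. One-dimensionality of the $z$-eigenspace reduces to a Liouville-type statement on the regular tree underlying $\mathbb{F}_r$: the only solutions of the eigenequation with boundary data in $L^2(\Omega, \mu)$ that are compatible with the cocycle $c_z$ are scalar multiples of $\mathbf{1}$.

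The main obstacle is assertion (1), topological irreducibility of the non-unitary $\pi_z$, because standard commutant arguments available for unitary representations do not apply. The plan here is to combine two ingredients: first, the cyclicity of $\mathbf{1}$ under $\pi_z(\mathbb{F}_r)$, which follows from the identity $\pi_z(g)\mathbf{1}(\omega) = c_z(g,\omega)$ together with the density of these cocycle functions in $L^2(\Omega, \mu)$; and second, an analytic-continuation argument from the known topological irreducibility of the unitary principal and complementary series on the real locus $E \cap \mathbb{R}$, propagating the triviality of the commutant to all of $E$ via the holomorphic family structure established above.
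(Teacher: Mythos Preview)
The paper does not prove this theorem at all; it is quoted verbatim from Pytlik \cite{Pytlik} and used as a black box in the proof of Theorem~\ref{free}. There is therefore nothing in the paper to compare your proposal against.

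Regarding the proposal itself as a sketch of Pytlik's argument, two of your steps are not arguments and would fail as written. First, for assertion (3) you write that ``analytic continuation in $z$ preserves this inclusion throughout $E$'' for the spectrum on $\mathbf{1}^\perp$. The spectrum of a holomorphic family of bounded operators is in general only upper semicontinuous, and containment in a fixed real interval certainly does not propagate off the real axis by analyticity alone (indeed, the point of the theorem is precisely that the eigenvalue $z$ \emph{does} leave the real axis). What actually happens in Pytlik's construction is structural: the restriction of $\pi_z$ to the complement of the constants is, via an explicit intertwiner, equivalent to a fixed representation independent of $z$, so the spectral computation on $\mathbf{1}^\perp$ is done once and no continuation is needed.

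Second, your plan for assertion (1) has the same defect. Topological irreducibility is not an analytic condition on $z$, and ``propagating the triviality of the commutant to all of $E$ via the holomorphic family structure'' is not a valid inference without a great deal of additional work (an invariant subspace for a single $\pi_{z_1}$ has no a priori relation to the family at other parameter values). Cyclicity of $\mathbf{1}$, which you do argue, is a genuine ingredient, but it must be combined with a direct analysis of the invariant subspaces using the explicit form of the cocycle, not with analytic continuation from the unitary locus.
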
 

Recall that by Lemma \ref{spaces} that $L^2(\Omega,\mu)$ is a $p$-space for any $p\in(1,\infty)$. We are now ready to finish the proof.

\begin{proof}[Proof of Theorem \ref{free}:]
Let $p \in (1,2]$, then $\theta<1$ and there exists such $r$ that $\theta < \frac{r-1}{r}$. Consider Pytlik's construction for this particular $r \in \mathbb N$. There exists $z_0$ such that $z_0\in E$, but 
$z_0 \notin \{z\in \mathbb{C} \mid -\theta \leq {\rm Im}(z) \leq \theta \}$.
Assume that $\pi_{z_0} \colon \mathbb F_r \to {\rm GL}(L^2(\Omega,\mu))$ is $p$-isometrisable, equivalently there exists a $p$-norm on $L^2(\Omega,\mu)$ such that $\pi_{z_0}(g)$ is isometric for any $g \in \mathbb{F}_r$. When this is the case, by Lemma \ref{spec}, we conclude that 
$${\rm sp}(\pi_{z_0}(\mu_1)) \subset \{z\in \mathbb{C} \mid -\theta \leq \rm{Im}(z) \leq \theta \}.$$
However, by the third property from Theorem \ref{rep} we have $z_0 \in {\rm sp}(\pi_{z_0}(\mu_1)).$ This is a contradiction. Thus, the group $\mathbb F_r$ is not $(2,p)$-isometrisable. We can now extend this to all groups containing a non-abelian free subgroup by Lemma \ref{induction}.
\end{proof}

\section{Conjectures and open questions} \label{final}

Let us recall the definition of the space of Littlewood functions $T_1(\Gamma)$.
\begin{defin}
The space of Littlewood functions is the space $T_1(\Gamma)$ of all the functions 
$f\colon \Gamma \to \mathbb{C}$
which admit the following decomposition: there exist functions $f_1\colon \Gamma\times \Gamma \to \mathbb{C}$ and $f_2\colon \Gamma\times \Gamma \to \mathbb{C}$ such that
$$f(s^{-1}t)=f_1(s,t)+f_2(s,t) \, \,\, \forall s, t \in \Gamma$$
and
$$\sup \limits_{s}\sum_{t }|f_1(s,t)| < \infty, \,\, \, \sup \limits_{t}\sum_{s}|f_2(s,t)| < \infty.$$
The space $T_1(\Gamma)$ is equipped with the norm
$$\|f\|_{T_1(\Gamma)}=\inf \left \lbrace\ \sup_s\sum_t|f_1(s,t)|+\sup_t\sum_s|f_2(s,t)|) \right \rbrace,$$
where infimum runs over all decompositions of this form.
\end{defin}
Properties of this space of functions are related to amenability and unitarisability of discrete groups. First of all, by the result of Wysocza\'nski \cite{Wysoczanski} amenability is characterized by the condition $T_1(\Gamma)\subseteq \ell^1(\Gamma)$. By the result of Bo\.zejko and Fendler \cite{BozejkoFendler} if $\Gamma$ is unitarisable then $T_1(\Gamma) \subseteq \ell^2(\Gamma)$. Since for any non-amenable group there exists some $\varepsilon>0$ such that $T_1(\Gamma)\nsubseteq \ell ^{1+\varepsilon}(\Gamma)$, see \cite{GGMT}, it is interesting to study the Littlewood exponent of a group $\Gamma$ defined as
$${\rm Lit}(\Gamma)=\inf\{p \mid T_1(\Gamma) \subset \ell^p(\Gamma)\}.$$
This quantity was studied \cite{GGMT} proving various results and giving a construction of a group with $1< {\rm Lit}(\Gamma)< \infty$.
It would be very interesting to understand which properties of a group $\Gamma$ imply that $T_1(\Gamma) \subset \ell^p(\Gamma).$ Maybe the following is true.
\begin{conjecture}\label{con} Let $1 \leq p \leq 2$.
Assume that $\Gamma$ is $(2,p)$-isometrisable. Then $T_1(\Gamma) \subset \ell^{q}(\Gamma)$, where $q$ is the conjugate exponent.
\end{conjecture}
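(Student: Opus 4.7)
The plan is to extend the Bożejko--Fendler theorem \cite{BozejkoFendler}, which is the $p=2$ case, to the full range $1 \leq p \leq 2$. Schematically: given a Littlewood function $f \in T_1(\Gamma)$, produce from it an upper-triangular uniformly bounded representation of $\Gamma$ on a $p$-space whose ``cocycle part'' encodes $f$; then invoke $(2,p)$-isometrisability to render this representation isometric in an equivalent $p$-norm; and finally exploit the rigidity this forces to extract $f \in \ell^q(\Gamma)$ via $\ell^p \leftrightarrow \ell^q$ duality. The conjecture interpolates in a clean way between the two known endpoints: at $p=1$ every group is $(2,1)$-isometrisable and Littlewood functions are automatically in $\ell^\infty(\Gamma)$, while at $p=2$ the statement is exactly Bożejko--Fendler.

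For the construction, I would take $\mathcal{H} = \ell^2(\Gamma)$, which is a $p$-space for every $p\in(1,2]$ by Lemma \ref{spaces}(6), and form the $p$-space $\mathcal{H} \oplus_p \mathcal{H}$ via Lemma \ref{spaces}(5). A Littlewood decomposition $f(s^{-1}t) = f_1(s,t) + f_2(s,t)$ should produce a derivation $D \colon \Gamma \to B(\mathcal{H})$ in the bimodule sense, $D(gh) = \lambda(g)D(h) + D(g)\lambda(h)$, with $\sup_g \|D(g)\| \lesssim \|f\|_{T_1(\Gamma)}$; the two pieces of the decomposition give Schur-multiplier-type operators whose norms are controlled by the row- and column-$\ell^1$ suprema defining $\|f\|_{T_1(\Gamma)}$. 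The associated upper-triangular representation
$$
\tilde\pi(g) = \begin{pmatrix} \lambda(g) & D(g) \\ 0 & \lambda(g) \end{pmatrix}
$$
is then a uniformly bounded representation of $\Gamma$ on $\mathcal{H} \oplus_p \mathcal{H}$.

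Under the $(2,p)$-isometrisability hypothesis, $\tilde\pi$ is isometric in some equivalent $p$-norm. The closed subspace $\mathcal{H} \oplus 0$ is $\tilde\pi$-invariant, so it remains invariant under the new isometric action. A cohomological vanishing statement for equivariant extensions of isometric $\Gamma$-actions on $p$-spaces should then force $D$ to be an inner derivation, $D(g) = \lambda(g)X - X\lambda(g)$ for some $X \in B(\mathcal{H})$ carrying additional $p$-integrability on its matrix coefficients. Evaluating this identity on the canonical basis $\{\delta_h\}$ and pairing via $\ell^p \leftrightarrow \ell^q$ duality should recover $f \in \ell^q(\Gamma)$ together with a quantitative estimate of the form $\|f\|_{\ell^q(\Gamma)} \leq C \|f\|_{T_1(\Gamma)}$.

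The hard part will be deducing innerness of $D$ from $(2,p)$-isometrisability. The Hilbert-space proof relies on orthogonal complementation of the invariant subspace $\mathcal{H} \oplus 0$, which is unavailable on a $p$-space for $p \neq 2$; one will instead need either a Hahn--Banach-based averaging in the dual, or a replacement of the usual Dixmier averaging that operates on $p$-spaces and yields an equivariant projection rather than merely a closed complement. Lamperti's description of $L^p$-isometries as weighted composition operators is a natural tool here, though exploiting it $\Gamma$-equivariantly is delicate. A further subtlety is that extracting $\ell^q$-summability (as opposed to some weaker summability) requires threading the Littlewood decomposition through $D$ in a way that makes the duality factor $1/p + 1/q = 1$ appear with the correct sign; a quantitative interpolation scheme between $p=1$ and $p=2$ may be the cleanest route, though the fact that the hypothesis itself depends on $p$ means the standard complex-interpolation machinery is not directly applicable.
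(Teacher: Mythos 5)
There is a genuine gap, and it sits exactly where the paper itself stops: the statement you are trying to prove is stated in Section \ref{final} as an \emph{open conjecture}, and your sketch does not close the part that is actually open. Your construction of the upper-triangular representation from the Littlewood decomposition is, in outline, the same as the first half of the intended argument (the paper records its conclusion as the Proposition in Section \ref{final}: $(2,p)$-isometrisability gives $T_1(\Gamma)\subset B_{q}(\Gamma)$, the coefficient space of isometric representations on $q$-spaces, by an argument ``quite similar to the classical case'' of \cite{BozejkoFendler}). The real difficulty is your last step, which you dispose of with ``pairing via $\ell^p\leftrightarrow\ell^q$ duality should recover $f\in\ell^q$ with a quantitative estimate''. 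This is not how the endpoint case works either: at $p=2$, passing from $f\in B(\Gamma)\cap T_1(\Gamma)$ to $f\in\ell^2(\Gamma)$ is not duality but the nontrivial fact that $B(\Gamma)$ has cotype $2$. The analogue needed here --- that $B_p(\Gamma)$ has cotype $q$ --- is precisely the conjecture of Daws \cite{daws} quoted at the end of Section \ref{final}, and it is open. So even if you fully carried out the innerness/coboundary step (which you yourself flag as unresolved, and for which Lamperti-type rigidity or Hahn--Banach averaging are only named, not used), your argument would still rest on an unproved cotype statement; no amount of massaging the derivation $D$ recovers summability without it.

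Two smaller points. First, the hypothesis is $(2,p)$-isometrisability, i.e.\ it applies to uniformly bounded representations on $2$-spaces only; you build $\tilde\pi$ on $\mathcal{H}\oplus_p\mathcal{H}$, which for $p\neq 2$ is not isometrically a Hilbert space, so the hypothesis does not apply to it as stated. You should realise $\tilde\pi$ on $\mathcal{H}\oplus_2\mathcal{H}$ and only then invoke isometrisability in an equivalent $p$-norm; this is a fixable slip, but it matters because the whole point of the definition is which norm the representation starts in. Second, your interpolation heuristic between $p=1$ and $p=2$ cannot be made into a proof for the reason you already note: the hypothesis varies with $p$, so there is no single analytic family to interpolate. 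In summary: your proposal is a reasonable reconstruction of the first (known) half of the strategy, but the statement remains a conjecture because the second half --- the cotype-$q$ property of the relevant coefficient space --- is missing, and your sketch does not supply it.
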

For $p=2$ this is a classical result of Bo\.zejko and Fendler (see \cite{BozejkoFendler}). The proof of this theorem proceeds in two steps. Firstly, one needs to prove that if $\Gamma$ is unitarisable, then $T_1(\Gamma)\subset B(\Gamma)$, where $B(\Gamma)$ is a space of coefficients of unitary representations. Secondly, using the fact that $B(\Gamma)$ has cotype $2$, one can prove that $T_1(\Gamma)\subset \ell^2(\Gamma)$. The possible strategy to prove Conjecture \ref{con} is to use $B_p(\Gamma)$ instead of $B(\Gamma)$, where $B_p(\Gamma)$ is a space of coefficients of isometric representation on $q$-spaces, see \cite{Runde} for details:
$$B_p(\Gamma)=\left\{ f \colon \Gamma \to \mathbb{C} \mid f \textnormal{ is a coefficient of some } (\pi,E) \subset {\rm Rep}_{q}(\Gamma)\right\}.$$
The fist step of the proof is quite similar to the classical case, see for example \cite{BozejkoFendler} or \cite{Pisier}. Thus we have,
\begin{prop}
If $\Gamma$ is $(2,p)$-isometrisable, then $T_1(\Gamma)\subset B_{q}(\Gamma).$
\end{prop}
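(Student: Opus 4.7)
The plan is to mimic the classical Bo\.zejko-Fendler strategy, using the $(2,p)$-isometrisability hypothesis only at the very end as a black box. Given $f \in T_1(\Gamma)$, I would first produce, independently of any isometrisability hypothesis, a uniformly bounded representation $(\pi, H)$ of $\Gamma$ on a Hilbert space $H$ together with vectors $\xi \in H$, $\eta \in H^*$ realising $f$ as a matrix coefficient $f(g) = \eta(\pi(g)\xi)$. Since $H$ is a $2$-space, the $(2,p)$-isometrisability of $\Gamma$ then furnishes an equivalent $p$-norm $\|\cdot\|_{\rm new}$ on $H$ turning it into a $p$-space on which every $\pi(g)$ is an isometry. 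The coefficient formula is purely linear and so persists under this renorming; by Lemma \ref{spaces} the new dual $(H, \|\cdot\|_{\rm new})^*$ is a $q$-space, so $f$ is exhibited as a coefficient of an isometric representation of $\Gamma$ on a $p$-space, i.e.\ $f \in B_q(\Gamma)$.

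The construction of $\pi$ goes back to the classical work of Bo\.zejko and Fendler; see \cite{BozejkoFendler} and the exposition in \cite{Pisier}. Starting from a decomposition $f(s^{-1}t) = f_1(s,t) + f_2(s,t)$ with $\sup_s \sum_t |f_1(s,t)| \leq C_1$ and $\sup_t \sum_s |f_2(s,t)| \leq C_2$, I would set $H = \ell^2(\Gamma) \oplus \ell^2(\Gamma)$ and take $\pi$ in the upper-triangular form
\begin{equation*}
\pi(g) = \begin{pmatrix} \lambda(g) & D(g) \\ 0 & \lambda(g) \end{pmatrix},
\end{equation*}
where $\lambda$ is the left regular representation and $D \colon \Gamma \to B(\ell^2(\Gamma))$ is a bounded derivation (equivalently, a $1$-cocycle for the $\lambda$-$\lambda$ bimodule action) built out of $f_1$ and $f_2$ via the Schur test. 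Multiplicativity $\pi(gh) = \pi(g)\pi(h)$ reduces to the derivation identity $D(gh) = \lambda(g) D(h) + D(g) \lambda(h)$, and uniform boundedness of $\pi$ reduces to $\sup_g \|D(g)\| < \infty$, which follows from the two one-sided $\ell^1$ estimates on $f_1$ and $f_2$. Suitable vectors $\xi \in H$ and $\eta \in H^*$ built from $\delta_e$ then recover $f$ as a matrix coefficient of $\pi$.

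The main obstacle, as hinted by the authors' remark that this step is similar to the classical case, is simply executing the Bo\.zejko-Fendler construction carefully: writing down $D(g)$ explicitly and verifying the uniform operator-norm bound on $D(g)$ from the two one-sided $\ell^1$ estimates on $f_1, f_2$. The subsequent passage from a uniformly bounded Hilbert-space representation to an isometric $p$-space representation is entirely formal, since $(2,p)$-isometrisability provides the equivalent $p$-norm as a black box and the coefficient pairing $\eta(\pi(g)\xi)$ is invariant under renorming.
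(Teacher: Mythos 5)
Your proposal follows essentially the same route the paper intends: the paper gives no further detail itself, merely invoking the classical Bo\.zejko--Fendler step (a derivation-type uniformly bounded triangular representation on $\ell^2(\Gamma)\oplus\ell^2(\Gamma)$ realizing a Littlewood function as a matrix coefficient) and then applying $(2,p)$-isometrisability, whose equivalent $p$-norm does not affect the coefficient pairing, so that $f$ becomes a coefficient of an isometric representation on a $p$-space and hence lies in $B_q(\Gamma)$. The only point to watch when executing the sketch is that the commutator derivation $D(g)=[\lambda(g),T_{f_1}]$ evaluated at $\delta_e$ recovers $f$ only up to an $\ell^1(\Gamma)$ correction (coming from $f_1(e,\cdot)$ and $f_2(\cdot^{-1},e)$), which is harmless because such functions are coefficients of the isometric regular representation on $\ell^p(\Gamma)$ and coefficients of a $\oplus_p$-direct sum of isometric representations again lie in $B_q(\Gamma)$.
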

To obtain $T_1(\Gamma)\subset \ell^q(\Gamma)$ one would need to use the cotype argument. Maybe the following conjecture that appeared in \cite{daws} holds:
\begin{conjecture}[Daws]
The space $B_p(\Gamma)$ has cotype $q$, where $\frac{1}{p}+\frac{1}{q}=1$.
\end{conjecture}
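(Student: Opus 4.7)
The plan is to follow the two-step template of Bo\.zejko and Fendler in the case $p=q=2$, where $B(\Gamma) = C^*(\Gamma)^*$ is shown to have cotype $2$ using the noncommutative Grothendieck inequality.

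The first step is a structural identification of $B_p(\Gamma)$ as a dual space. Introduce the completion $\mathcal{U}_q(\Gamma)$ of $\ell^1(\Gamma)$ under the norm
$$\|\xi\|_{\mathcal{U}_q} := \sup\{\|\pi(\xi)\| : (\pi,E) \text{ is an isometric representation of } \Gamma \text{ on a } q\text{-space}\},$$
and prove an isometric identification $\mathcal{U}_q(\Gamma)^{*} \cong B_p(\Gamma)$. This should follow from a standard duality argument: each coefficient $f(g) = \langle \pi(g)v, w\rangle$ defines a functional on $\mathcal{U}_q(\Gamma)$ of norm at most $\|v\|\,\|w\|$, and conversely every functional is realised as a coefficient by a GNS-style construction adapted to $q$-spaces. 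For $p=2$ this recovers the classical $C^*(\Gamma)^* \cong B(\Gamma)$.

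The second, decisive, step is to prove that $\mathcal{U}_q(\Gamma)^{*}$ has cotype $q$. For $q=2$ this is Pisier's theorem that the dual of any $C^*$-algebra has cotype $2$, resting on the noncommutative Grothendieck inequality. The natural $p$-analog would be a factorisation statement: every bounded operator $\mathcal{U}_q(\Gamma) \to \mathcal{U}_q(\Gamma)^{*}$ factors through an $L^q$-space with controlled constants. Since $L^q$ has cotype $q$ (for $q\geq 2$), this would transfer to $B_p(\Gamma)$. Concretely, given $f_1,\dots,f_n \in B_p(\Gamma)$ one would aim to realise them simultaneously as coefficients of a \emph{single} isometric representation $(\pi,E)$ on a $q$-space, with vectors $v_i \in E$ and $w_i \in E^{*}$ satisfying
$$\left(\sum_i \|v_i\|^q\right)^{1/q}\left(\sum_i \|w_i\|^p\right)^{1/p}\;\lesssim\;\left(\sum_i \|f_i\|_{B_p}^q\right)^{1/q}.$$
A $q$-direct-sum construction (cf.\ Lemma \ref{spaces}(5)) together with a Khintchine--Kahane type inequality inside $E$ should then close the cotype estimate.

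The main obstacle is precisely the absence of a $p$-operator space analog of the noncommutative Grothendieck inequality. The theory of $p$-operator spaces developed by Le Merdy, Daws and others is not at present rich enough to furnish such a factorisation, and the conjecture isolates exactly this gap. A proof will therefore likely either establish a Grothendieck-type theorem for $\mathcal{U}_q(\Gamma)$ (perhaps exploiting the specific group-algebra structure and the abundance of isometric representations coming from induced constructions as in Lemma \ref{induction}) or produce a direct argument that extracts the cotype $q$ estimate from the coefficient structure of $B_p(\Gamma)$ without factorising through $L^q$ at all.
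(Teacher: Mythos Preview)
There is nothing to compare your proposal against: the statement is recorded in the paper as a \emph{conjecture} (attributed to Daws), and the paper offers no proof. It appears in Section~\ref{final} precisely because it is open; the paper only explains that, \emph{were} it true, one could complete the Bo\.zejko--Fendler strategy and deduce Conjecture~\ref{con}.

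Your write-up is not a proof either, and you say so yourself. You correctly identify the two-step template (duality $B_p(\Gamma)\cong \mathcal U_q(\Gamma)^*$, then a cotype/factorisation argument) and then explicitly name the gap: there is no known $p$-analogue of the noncommutative Grothendieck inequality that would force operators $\mathcal U_q(\Gamma)\to \mathcal U_q(\Gamma)^*$ to factor through $L^q$. That is exactly the missing ingredient, and it is the reason the statement remains a conjecture. So your proposal is an accurate diagnosis of the problem, consonant with the paper's own discussion, but it does not constitute a proof and should not be presented as one.
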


\section*{Acknowledgements}
We thank Nicolas Monod, Tim de Laat, Hannes Thiel and Eusebio Gardella for helpful comments on a first version of this note. This research was supported in part by the ERC Consolidator Grant No.\ 681207. The results presented in this paper are part of the PhD project of the first author. 

\begin{bibdiv}
\begin{biblist}
\bib{BozejkoFendler}{article}{
title={Herz-Schur multipliers and uniformly bounded representations of discrete groups},
  author={Marek Bo{\.z}ejko and  Gero Fendler},
  journal={Archiv der Mathematik},
  volume={57},
  number={3},
  pages={290--298},
  year={1991},
  publisher={Springer}
}
\bib{arch}{article}
{
  title={Isometric dilations and {$H^{\infty}$}-calculus for bounded analytic semigroups and {R}itt operators},
  author={C{\'e}dric Arhancet. Stephan Fackler. Christian Le Merdy},
  journal={Transactions of the American Mathematical Society},
  volume={369},
  number={10},
  pages={6899--6933},
  year={2017}
}

\bib{daws}{article}
{
  title={p-operator spaces and Figa-Talamanca--Herz algebras},
  author={Daws, Matthew},
  journal={Journal of Operator Theory},
  pages={47--83},
  year={2010},
  publisher={JSTOR}
}

\bib{Day}{article}
{
  title={Means for the bounded functions and ergodicity of the bounded representations of semi-groups},
  author={Day, Mahlon M.},
  journal={Transactions of the American Mathematical Society},
  volume={69},
  number={2},
  pages={276--291},
  year={1950},
  publisher={JSTOR}
}
\bib{Dixmier}{article}{
  title={Les moyennes invariantes dans les semi-groupes et leurs applications},
  author={Dixmier, Jacques},
  journal={Acta Sci. Math. Szeged},
  volume={12},
  number={Leopoldo Fej{\'e}r et Frederico Riesz LXX annos natis dedicatus, Pars A},
  pages={213--227},
  year={1950}
}
\bib{GGMT}{article}
{
  title={Asymptotics of Cheeger constants and unitarisability of groups},
  author={Maria Gerasimova. Dominik Gruber.Nicolas Monod. Andreas Thom},
  journal={arXiv preprint arXiv:1801.09600},
  year={2018}
}
\bib{Gillaspie}{article}
{
title={Power-bounded Invertible Operators and Invertible Isometries on $L^p$-Spaces},
  author={Gillespie, Alastair},
  booktitle={Operator Semigroups Meet Complex Analysis, Harmonic Analysis and Mathematical Physics},
  pages={241--252},
  year={2015},
  publisher={Springer}

}

\bib{Kwapien}{article}
{
  title={On operators factorizable through $ L_p$-space},
  author={Kwapien, Stanislaw},
  journal={M{\'e}moires de la Soci{\'e}t{\'e} Math{\'e}matique de France},
  volume={31},
  pages={215--225},
  year={1972}
}
\bib{Nakamura}{article}{
  title={Group representation and Banach limit},
  author={Masahiro Nakamura  and Zir{\^o} Takeda},
  journal={Tohoku Mathematical Journal, Second Series},
  volume={3},
  number={2},
  pages={132--135},
  year={1951},
  publisher={Mathematical Institute, Tohoku University}
}

\bib{Pisier}{book}{
 title={Similarity problems and completely bounded maps},
  author={Pisier, Gilles},
  year={2004},
  publisher={Springer}
}

\bib{Pytlik}{article}{
title={Spherical functions and uniformly bounded representations of free groups},
  author={ Pytlik, Tadeusz},
  journal={Studia. Math.},
  volume={100},
  number={3},
  pages={237-250},
  year={1991},
  }
\bib{Runde}{article}{
  title={Representations of locally compact groups on QSLp-spaces and a p-analog of the Fourier-Stieltjes algebra},
  author={Runde, Volker},
  journal={Pacific J. Math},
  volume={221},
  pages={379--397},
  year={2005}
}
\bib{Wysoczanski}{article}{
    AUTHOR = {Wysocza\'nski, Janusz},
     TITLE = {Characterization of amenable groups and the {L}ittlewood
              functions on free groups},
   JOURNAL = {Colloq. Math.},
  FJOURNAL = {Colloquium Mathematicum},
    VOLUME = {55},
      YEAR = {1988},
    NUMBER = {2},
     PAGES = {261--265},
      ISSN = {0010-1354},
   MRCLASS = {43A15 (22D05)},
  MRNUMBER = {978923},
MRREVIEWER = {Michael Leinert},
       URL = {https://doi.org/10.4064/cm-55-2-261-265},
}

\end{biblist}
\end{bibdiv}

\end{document}